\documentclass[11pt,reqno]{amsart}
\setlength{\voffset}{-.25in}
\usepackage{amssymb,latexsym}
\usepackage{graphicx}
\usepackage{url}		

\textwidth=6.175in
\textheight=9.0in
\headheight=13pt
\calclayout
\theoremstyle{plain}
\numberwithin{equation}{section}
\newtheorem{thm}{Theorem}[section]
\newtheorem{theorem}[thm]{Theorem}
\newtheorem{lemma}[thm]{Lemma}
\newtheorem{corollary}[thm]{Corollary}

\newtheorem{conjecture}[thm]{Conjecture}

\begin{document}
\title{Reciprocal Sum of Palindromes}
\author{Phakhinkon Phunphayap and Prapanpong Pongsriiam$^*$}
\address{Department of Mathematics, Faculty of Science\\
	Silpakorn University\\
	Nakhon Pathom\\
	73000, Thailand}
\email{phakhinkon@gmail.com}
\address{Department of Mathematics, Faculty of Science\\
                Silpakorn University\\
                Nakhon Pathom\\
               73000, Thailand}
\email{prapanpong@gmail.com, pongsriiam\_p@silpakorn.edu}
\thanks{*Prapanpong Pongsriiam receives financial support jointly from The Thailand Research Fund and Faculty of Science Silpakorn University, grant number RSA5980040. He is the corresponding author.}
\begin{abstract}
A positive integer $n$ is said to be a palindrome in base $b$ (or $b$-adic palindrome) if the representation of $n = (a_k a_{k-1} \cdots a_0)_b$ in base $b$ with $a_k \neq 0$ has the symmetric property $a_{k-i} = a_i$ for every $i=0,1,2,\ldots ,k$. Let $s_b$ be the reciprocal sum of all $b$-adic palindromes. It is not difficult to show that $s_b$ converges. In this article, we obtain upper and lower bounds for $s_b$ and the inequality $s_{b} <s_{b'}$ for $2\leq b<b'$. Its consequences and some numerical data are also given.

keywords: Palindrome, palindromic number, reciprocal, series, approximation 

2010 Mathematics Subject Classification: 40A25; 11A63; 11Y60 
\end{abstract}

\maketitle
\section{Introduction}

In recent years, there has been an increasing interest in the importance of palindromes in mathematics, theoretical computer science and theoretical physics. This stems from their role in the modeling of quasi-crystals in theoretical physics (see e.g. \cite{Dam,Hof}) and also Diophantine approximation in number theory \cite{Adam1,Adam2,Adam3,Adam4,Bug,Fis,Roy,Roy2}.

Throughout this article, $a,b,m,n,k,\ell$ are positive integers, $x,y$ are positive real numbers, $\left\lfloor x\right\rfloor$ is the largest integer less than or equal to $x$, $\left\lceil x\right\rceil$ is the smallest integer not less than $x$, and $\log x$ is the natural logarithm of $x$. Let $n = (a_ka_{k-1}\cdots a_0)_{10}$ be the decimal expansion of $n$ with $a_k \neq 0$. Then $n$ is said to be a palindrome if the digits of $n$ satisfy the symmetric property: $a_i = a_{k-i}$ for $0\leq i \leq \left\lfloor \frac{k}{2}\right\rfloor$. So, for example, $1, 2, 3, \ldots, 9$ are palindromes which have $1$ digit, $11, 22, 33, \ldots, 99$ are palindromes which have $2$ digits, and $101, 111, 121, \ldots, 191, 202, 212, \ldots, 292, \ldots, 909, 919, \ldots, 999$ are palindromes which have $3$ digits. The sequence of palindromes is the entry  A002113 in OEIS \cite{Slo}. See also the sequences A002385 and A002779 for palindromic primes and palindromic squares \cite{Slo}, respectively.
	
The definition of palindromic numbers can also be generalized to a general base. If $b>1$, then $n$ is said to be a palindrome in base $b$ (or a $b$-adic palindrome) if the $b$-adic expansion of $n=(a_k a_{k-1}\ldots a_0)_b$ with $a_k\neq 0$ satisfies $a_{k-i}=a_i$ for $0\leq i \leq \left\lfloor \frac{k}{2}\right\rfloor$. For convenience, if we write a number without specifying the base, then it is always in base 10. Furthermore, we let $P_b$ be the set of all palindromes in base $b$, $s_b$ the reciprocal sum of all $b$-adic palindromes, and $s_{b,k}$ the reciprocal sum of all $b$-adic palindromes which have $k$ digits in its $b$-adic expansion. The set $P_b$ is infinite but quite sparse, so it is not difficult to show that 
\begin{equation*}
	s_b=\sum_{\substack{n=1\\n\in P_b}}^\infty\frac{1}{n}\quad\text{converges}.
\end{equation*}
To verify the above assertion, we only need to recall monotone convergence theorem (see for example in \cite[Theorem 3.3.2]{BSh}) and observe that the number of $b$-adic palindromes which have $k$ digits is less than $b^{\frac{k+1}{2}}$. Therefore
\begin{equation*}
	s_{b,k} = \sum_{\substack{b^{k-1}\leq n < b^k\\ n\in P_b}} \frac{1}{n} \leq \frac{1}{b^{k-1}} \sum_{\substack{b^{k-1}\leq n < b^k\\ n\in P_b}} 1 \leq \frac{b^{\frac{k+1}{2}}}{b^{k-1}} = \frac{1}{b^{\frac{k-3}{2}}},
\end{equation*}
and thus 
\begin{equation*}
	s_b = \sum_{k=1}^\infty s_{b,k} \leq \sum_{k=1}^\infty \frac{1}{b^{\frac{k-3}{2}}} = b^{\frac{3}{2}}\sum_{k=1}^\infty \left( \frac{1}{\sqrt{b}} \right)^k = \frac{b^{\frac{3}{2}}}{\sqrt{b} - 1}.
\end{equation*}
It seems that there are some discussions on the reciprocal sum of $b$-adic palindromes on the internet (see, for instance, in \cite{Slo} and \cite{Math}) but as far as we are aware, our observation has not appeared in the literature. 

In this article, we obtain upper and lower bounds for each $s_b$, which enable us to show that $s_b <s_{b'}$ for $2\leq b<b'$. This inequality is not trivial since $s_{b+1}-s_b$ converges to zero as $b\rightarrow \infty$ (see Corollary \ref{rpcor4}). So a careful estimate is needed. We also show some numerical data and post an open question related to the reciprocal sum of palindromes at the end of this article.

Remark that the reciprocal sum of an integer sequence is also of general interest as proposed by Bayless and Klyve \cite{Bay} and by 
Roggero, Nardelli, and Di Noto \cite{Rog}. The additive property of palindromes has recently been investigated by Banks \cite{Bank}, Cilleruelo, Luca, and Baxter \cite{Cil}, and Rajasekaran, Shallit, and Smith \cite{Ra}. For other results concerning palindromes, we refer the reader to \cite{Adam1,Adam2,Adam3,Adam4,Amb,Bank2,Bank3,Brl,Bug,Dam,Fic,Fis,Goin,Grim,Hof,Luca,Roy,Roy2,Slo}.

\section{Main Results}

\begin{theorem}\label{rpthm1}
Let $b\geq 2$, $x_b = \sum_{a=1}^{b-1}\frac{1}{a}$, and $y_b = \sum_{a=2}^b\frac{1}{a}$. Then $s_b$ satisfies the following inequalities for all $\ell\geq 3$ and $m\geq 2$,
\begin{align*}
s_b &\geq \frac{b+2}{b+1}x_b+\frac{2y_b}{b^{\left\lceil\frac{\ell}{2}\right\rceil} - b^{\left\lceil\frac{\ell}{2}\right\rceil -1}} + \sum_{k=3}^{2\left\lceil \frac{\ell}{2}\right\rceil - 1} s_{b,k}, \\
s_b &\leq \left(\frac{b+2}{b+1}+\frac{2}{ b^{m}-b^{m-1}}+\sum_{k=\ell}^{2m-1} \frac{b^{\left\lceil \frac{k-2}{2}\right\rceil}}{b^{k-1}+1}\right)x_b + \sum_{k=3}^{\ell -1} s_{b,k}.
\end{align*}
\end{theorem}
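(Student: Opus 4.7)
The plan is to split $s_b$ by digit length and estimate each $s_{b,k}$ separately. First I would compute $s_{b,1}$ and $s_{b,2}$ exactly: the one-digit palindromes are $1, 2, \ldots, b-1$, giving $s_{b,1} = x_b$; the two-digit palindromes are $(aa)_b = a(b+1)$ for $1 \leq a \leq b - 1$, giving $s_{b,2} = x_b/(b+1)$. Together, $s_{b,1} + s_{b,2} = \frac{b+2}{b+1} x_b$, which is the leading term of both bounds.

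The heart of the argument is a per-length two-sided estimate for $k \geq 2$. Fixing the common leading-and-trailing digit $a \in \{1, \ldots, b-1\}$, a $k$-digit $b$-adic palindrome is determined by its $\lceil k/2 \rceil - 1$ inner half-digits, giving exactly $b^{\lceil k/2 \rceil - 1}$ such palindromes, each satisfying
\begin{equation*}
a(b^{k-1} + 1) \leq n < (a+1) b^{k-1},
\end{equation*}
the left endpoint being attained when all inner digits vanish. Summing $1/n$ first over palindromes with fixed outer digit $a$ and then over $a$, and using the identities $\lceil k/2 \rceil - 1 = \lceil (k-2)/2 \rceil$ and $k - \lceil k/2 \rceil = \lfloor k/2 \rfloor$, I obtain the key inequality
\begin{equation*}
\frac{y_b}{b^{\lfloor k/2 \rfloor}} \leq s_{b,k} \leq \frac{b^{\lceil (k-2)/2 \rceil}}{b^{k-1}+1}\, x_b.
\end{equation*}

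For the lower bound of the theorem I would retain $s_{b,k}$ exactly for $1 \leq k \leq 2\lceil \ell/2 \rceil - 1$ and apply the left half of the key inequality to the tail $k \geq 2\lceil \ell/2 \rceil$. Since $\lfloor k/2 \rfloor$ is constant on each pair $\{2j, 2j+1\}$, the geometric tail
\begin{equation*}
\sum_{k \geq 2\lceil \ell/2 \rceil} b^{-\lfloor k/2 \rfloor} = \frac{2}{b^{\lceil \ell/2 \rceil - 1}(b - 1)}
\end{equation*}
pairs up cleanly and, after multiplying by $y_b$, reproduces the stated $\frac{2 y_b}{b^{\lceil \ell/2 \rceil} - b^{\lceil \ell/2 \rceil - 1}}$ term. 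For the upper bound I would keep $s_{b,k}$ exactly for $3 \leq k \leq \ell - 1$, apply the right half of the key inequality for $\ell \leq k \leq 2m-1$, and for $k \geq 2m$ use the cruder bound $s_{b,k} \leq x_b/b^{\lfloor k/2 \rfloor}$ (obtained by dropping the $+1$ in the denominator); the same paired geometric sum then contributes $\frac{2 x_b}{b^m - b^{m-1}}$.

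The main obstacle is parity-dependent bookkeeping: one must juggle floors and ceilings and partition $[3, \infty)$ into the ranges in which $s_{b,k}$ is kept exactly, estimated by the fine bound, and estimated by the crude tail bound, so that the pieces assemble into the stated expressions without gaps or overlaps. Beyond this indexing care, the estimates themselves are routine.
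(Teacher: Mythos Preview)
Your proposal is correct and follows essentially the same approach as the paper: compute $s_{b,1}+s_{b,2}=\frac{b+2}{b+1}x_b$ exactly, bound each $s_{b,k}$ via the count $b^{\lceil(k-2)/2\rceil}$ of palindromes with fixed outer digit together with the extremal denominators $a(b^{k-1}+1)$ and $(a+1)b^{k-1}$, and then split the tail into a finite exact part, a fine-estimate part, and a crude geometric tail whose paired sum gives the $\frac{2}{b^{\cdot}-b^{\cdot-1}}$ terms. The only cosmetic difference is that the paper phrases the counting for $k\geq 3$ rather than your $k\geq 2$, but the inequalities and the assembly are identical.
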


As usual, if $2m-1<\ell$ or $\ell -1 <3$, the sum such as
\begin{equation*}
\sum_{k=\ell}^{2m-1} \frac{b^{\left\lceil \frac{k-2}{2}\right\rceil}}{b^{k-1}+1} \quad\text{or}\quad \sum_{k=3}^{\ell -1} s_{b,k}
\end{equation*}	
are considered to be zero. In addition, the upper and lower bounds given in Theorem \ref{rpthm1} may look complicated but it can lead us to the proof of Theorem \ref{rpthm2}.  Furthermore, we can apply Theorem \ref{rpthm1} with $m=2$ and $\ell=3$ to obtain a simpler bound as shown in the next corollary.

\begin{corollary}\label{rpcor1}
	Let $b\geq 2$, $x_b = \sum_{a=1}^{b-1}\frac{1}{a}$, and $y_b = \sum_{a=2}^b\frac{1}{a}$. Then
	\begin{equation*}
		\frac{b+2}{b+1}x_b+\frac{2y_b}{b(b-1)} \leq s_b \leq \left(\frac{b+2}{b+1}+\frac{2}{b(b-1)}+\frac{b}{b^2 +1}\right)x_b.
	\end{equation*}
\end{corollary}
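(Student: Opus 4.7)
The plan is simply to substitute the allowed values $m=2$ and $\ell=3$ into the two inequalities of Theorem \ref{rpthm1} and simplify the floors/ceilings, then drop one non-negative term from the lower bound to match the form stated in the corollary.

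First I would handle the lower bound. With $\ell=3$, one has $\left\lceil \ell/2\right\rceil = 2$, so $b^{\left\lceil \ell/2\right\rceil}-b^{\left\lceil \ell/2\right\rceil-1} = b^{2}-b = b(b-1)$, and $2\left\lceil \ell/2\right\rceil -1 = 3$. Theorem \ref{rpthm1} therefore gives
\begin{equation*}
s_b \;\geq\; \frac{b+2}{b+1}\,x_b+\frac{2y_b}{b(b-1)}+s_{b,3}.
\end{equation*}
Since $s_{b,3}\geq 0$, discarding this last term yields the lower bound stated in the corollary.

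Next I would handle the upper bound. With $m=2$ and $\ell=3$, the first sum in the upper bound of Theorem \ref{rpthm1} runs from $k=3$ to $k=2m-1=3$, contributing only the single term with $k=3$, for which $\left\lceil (k-2)/2\right\rceil = 1$ and so $b^{\left\lceil (k-2)/2\right\rceil}/(b^{k-1}+1) = b/(b^{2}+1)$. The remaining sum $\sum_{k=3}^{\ell-1}s_{b,k} = \sum_{k=3}^{2}s_{b,k}$ is empty, hence zero by convention. Substituting $b^{m}-b^{m-1} = b(b-1)$ as before, Theorem \ref{rpthm1} yields exactly
\begin{equation*}
s_b \;\leq\; \left(\frac{b+2}{b+1}+\frac{2}{b(b-1)}+\frac{b}{b^{2}+1}\right)x_b,
\end{equation*}
which is the desired upper bound.

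There is essentially no obstacle here beyond carefully evaluating the ceilings $\left\lceil \ell/2\right\rceil$ and $\left\lceil (k-2)/2\right\rceil$ at the chosen parameters and observing that the empty-sum convention applies on the upper side while on the lower side the single surviving term $s_{b,3}$ is non-negative and can be discarded; the rest of the work is already contained in Theorem \ref{rpthm1}.
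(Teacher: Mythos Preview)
Your proposal is correct and follows exactly the approach the paper takes: substitute $m=2$ and $\ell=3$ in Theorem~\ref{rpthm1}. You even spell out one small point the paper leaves implicit, namely that after substitution the lower bound still carries the extra non-negative term $s_{b,3}$, which must be discarded to obtain the stated inequality.
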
 

\begin{proof}
As discussed above this corollary follows from the substitution $m=2$ and $\ell = 3$ in Theorem \ref{rpthm1}.	
\end{proof}

In Corollary \ref{rpcor2}, we show that the difference between the upper and lower bounds for $s_b$ given in Corollary \ref{rpcor1} converges to zero as $b\rightarrow \infty$. This means that these bounds are good estimates for $s_b$. In fact, we can give an asymptotic formula for $s_b$ (see also in Corollary \ref{rpcor2}). Other results concerning the sequence $(s_b)_{b\geq 2}$ are also given in Corollaries \ref{rpcor3} and \ref{rpcor4}. Now we prove Theorem \ref{rpthm1}.

\begin{proof}[Proof of Theorem \ref{rpthm1}]
	For simplicity, we write $x$ and $y$ instead of $x_b$ and $y_b$, respectively. We first consider the reciprocal sum of palindromes in base $b$ which have $k$ digits in their $b$-adic expansion:
	\begin{equation*}
		s_{b,k} = \sum_{\substack{b^{k-1}\leq n < b^k\\ n\in P_b}}\frac{1}{n}.
	\end{equation*}
	Obviously when $k=1$, the sum is $1+\frac{1}{2}+\cdots +\frac{1}{b-1} = x$. For $k=2$, the sum is 
	\begin{equation}\label{thm1eq1}
		\frac{1}{(11)_b}+\frac{1}{(22)_b}+\cdots+\frac{1}{((b-1)(b-1))_b} = \frac{1}{b+1} + \frac{1}{2b+2} + \cdots + \frac{1}{(b-1)b+b-1} = \frac{x}{b+1}.
	\end{equation}
	Let $k\geq \ell$. The $b$-adic palindromes which have $k$ digits are of the form $(aa_1a_2\cdots a_{k-2}a)_{b}$ where $1\leq a\leq b-1$, $0\leq a_i \leq b-1$ for all $i\in \{1, 2, \ldots, k-2\}$, with the usual symmetry conditions on $a_i$. We fix $a$ and count the number of palindromes in this form. There are $b$ choices for $a_1\in \{0, 1, 2, \ldots, b-1\}$ and so there is only 1 choice for $a_{k-2} = a_1$. Similarly, there are $b$ choices for $a_2$ and 1 choice for $a_{k-3}$. By continue this counting, we see that the number of palindromes in this form (when $a$ is already chosen) is equal to $b^{\left\lceil \frac{k-2}{2}\right\rceil}$. Therefore the reciprocal sum of such palindromes satisfies
	\begin{equation*}
		\sum_{a_1,a_2,\ldots,a_{k-2}}\frac{1}{(aa_1a_2\cdots a_{k-2}a)_b}\leq \frac{b^{\left\lceil \frac{k-2}{2}\right\rceil}}{(a00\cdots 0a)_b} \leq \frac{b^{\left\lceil \frac{k-2}{2}\right\rceil}}{a(b^{k-1}+1)},
	\end{equation*}
	where $a_1,a_2,\ldots ,a_{k-2}$ run over all integers $0,1,2,\ldots b-1$ with the usual symmetry conditions of palindromes. Hence
	\begin{equation}\label{thm1eq2}
		s_{b,k} = \sum_{a=1}^{b-1}\sum_{a_1,a_2,\ldots,a_k}\frac{1}{(aa_1a_2\cdots a_{k-2}a)_b}\leq  \sum_{a=1}^{b-1}\frac{b^{\left\lceil \frac{k-2}{2}\right\rceil}}{a(b^{k-1}+1)} = \frac{b^{\left\lceil \frac{k-2}{2}\right\rceil}x}{b^{k-1}+1}.
	\end{equation}
	This implies that 
	\begin{align}\label{eq1}
		s_b &= x+\frac{x}{b+1}+ \sum_{k=3}^{\ell -1} s_{b,k}  +\sum_{k=\ell}^\infty s_{b,k}\nonumber\\
		&\leq \frac{b+2}{b+1}x + \sum_{k=3}^{\ell -1} s_{b,k} + \sum_{k=\ell}^\infty  \frac{b^{\left\lceil \frac{k-2}{2}\right\rceil}x}{b^{k-1}+1}\nonumber\\
		&\leq \frac{b+2}{b+1}x  + \sum_{k=3}^{\ell -1} s_{b,k} + \sum_{k=\ell}^{2m-1}  \frac{b^{\left\lceil \frac{k-2}{2}\right\rceil}x}{b^{k-1}+1}+\sum_{k=2m}^\infty  \frac{b^{\left\lceil \frac{k-2}{2}\right\rceil}x}{b^{k-1}}.
	\end{align}
	If $k = 2t$, then
	\begin{equation*}
		\frac{b^{\left\lceil \frac{k-2}{2}\right\rceil}}{b^{k-1}} = \frac{b^{t-1}}{b^{2t-1}} = \frac{1}{b^t} =  \frac{1}{b^{\left\lfloor  \frac{k}{2}\right\rfloor}}.
	\end{equation*}
	Similarly, if $k = 2t+1$, then 
	\begin{equation*}
		\frac{b^{\left\lceil \frac{k-2}{2}\right\rceil}}{b^{k-1}} = \frac{1}{b^t} =  \frac{1}{b^{\left\lfloor  \frac{k}{2}\right\rfloor}}.
	\end{equation*}
	Therefore
	\begin{equation}\label{eq2}
		\sum_{k=2m}^\infty  \frac{b^{\left\lceil \frac{k-2}{2}\right\rceil}}{b^{k-1}} = \sum_{k=2m}^\infty  \frac{1}{b^{\left\lfloor  \frac{k}{2}\right\rfloor}} = \sum_{k=m}^\infty \frac{2}{b^k} = \frac{2}{(b-1)\cdot b^{m-1}}.
	\end{equation}
	Substituting \eqref{eq2} in \eqref{eq1}, we obtain the desired upper bound for $s_b$. Similarly, if $a\in \{1,2,\ldots b-1\}$ is fixed, then
	\begin{equation*}
		\sum_{a_1, a_2, \ldots, a_{k-2}}\frac{1}{(aa_1a_2\cdots a_{k-2}a)_{b}} \geq \frac{b^{\left\lceil \frac{k-2}{2}\right\rceil}}{(a(b-1)(b-1)\cdots (b-1)a)_{b}} \geq \frac{b^{\left\lceil \frac{k-2}{2}\right\rceil}}{(a+1)b^{k-1}} = \frac{1}{(a+1) b^{\left\lfloor  \frac{k}{2}\right\rfloor}}.
	\end{equation*}
	Therefore 
	\begin{equation}\label{eq3}
		s_{b,k} \geq \sum_{a=1}^{b-1}\frac{1}{(a+1)b^{\left\lfloor  \frac{k}{2}\right\rfloor}} = \frac{y}{b^{\left\lfloor  \frac{k}{2}\right\rfloor}},
	\end{equation}
	where $y=\sum_{a=2}^{b} \frac{1}{a}$. We write
	\begin{equation*}
		s_b = x + \frac{x}{b+1} + \sum_{k=3}^{2\left\lceil \frac{\ell}{2}\right\rceil - 1}s_{b,k} +\sum_{k=2\left\lceil \frac{\ell}{2}\right\rceil}^\infty s_{b,k},
	\end{equation*}
	and apply \eqref{eq3} to the last sum to obtain 
	\begin{align*}
		s_b &\geq x+\frac{x}{b+1} + \sum_{k=3}^{2\left\lceil \frac{\ell}{2}\right\rceil - 1} s_{b,k} +\sum_{k=2\left\lceil \frac{\ell}{2}\right\rceil}^\infty \frac{y}{b^{\left\lfloor  \frac{k}{2}\right\rfloor}}\\
		&= \frac{b+2}{b+1}x+\sum_{k=3}^{2\left\lceil \frac{\ell}{2}\right\rceil - 1} s_{b,k}+2y\sum_{k=\left\lceil \frac{\ell}{2}\right\rceil}^\infty \frac{1}{b^k}\\
		&= \frac{b+2}{b+1}x+\frac{2y}{(b-1)b^{\left\lceil\frac{\ell}{2}\right\rceil - 1}} + \sum_{k=3}^{2\left\lceil \frac{\ell}{2}\right\rceil - 1} s_{b,k}.
	\end{align*}
	This completes the proof.
\end{proof}

The bounds given in Theorem \ref{rpthm1} hold for all $\ell \geq 3$ and $m\geq 2$, so we only need to find a suitable choice of $m$ and $\ell$ to obtain the inequality between $s_b$ for each $b$. Our method works for a wider range of $b$ but we choose to display the bounds of $s_b$ only for $b=2,3,\ldots,16$ since these seem to be the most common bases we are interested in. Nevertheless, we plan to put more data on the approximated values of $s_b$ for $b\leq 10^4$ in the second author's ResearchGate website \cite{Pong2}, which will be freely downloadable by everyone. Now substituting $\ell =5$ and $m=5$ in Theorem \ref{rpthm1}, the bounds of $s_b$ are as follows:
\begin{align*}
	&2.2797059 \leq s_2 \leq 2.5828238 , \quad  2.5870062 \leq s_3 \leq 2.7023567, \quad 2.7730961 \leq s_4 \leq 2.8314382,\\
	&2.9135124 \leq s_5 \leq 2.9481690, \quad  3.0292313 \leq s_6 \leq 3.0520161, \quad  3.1289733 \leq s_7 \leq 3.1450277,\\
	&3.2172728 \leq s_8 \leq 3.2291661, \quad 3.2968399 \leq s_9 \leq 3.3059903, \quad 3.3694527 \leq s_{10} \leq 3.3767037,\\
	&3.4363567 \leq s_{11} \leq 3.4422399, \ \ 3.4984669 \leq s_{12} \leq 3.5033337, \ \ 3.5564805 \leq s_{13} \leq 3.5605718,\\
	&3.6109440 \leq s_{14} \leq 3.6144306, \ \ 3.6622953 \leq s_{15} \leq 3.6653014, \ \  3.7108920 \leq s_{16} \leq 3.7135101.
\end{align*}
From the above inequalities, we easily see that $s_b<s_{b'}$ for $2\leq b< b'\leq 16$. In fact, this inequality holds in general.

\begin{theorem}\label{rpthm2}
	For $2\leq b<b'$, we have $s_b<s_{b'}$.
\end{theorem}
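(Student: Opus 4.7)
The plan is to reduce the general inequality to the consecutive case $s_b < s_{b+1}$ for every $b \geq 2$ by transitivity, and then handle small and large $b$ separately.

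For small $b$ the numerical bounds from Theorem~\ref{rpthm1} with $\ell = m = 5$ tabulated in the excerpt already yield $s_b \leq U_b < L_{b+1} \leq s_{b+1}$ for $2 \leq b \leq 15$; extending this to any finite range $b \leq b_0$ needs only to enlarge $\ell$ and $m$.

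For $b \geq b_0$ I would apply Theorem~\ref{rpthm1} with $\ell = m = 5$ to both $s_b$ (upper bound) and $s_{b+1}$ (lower bound) and decompose the gap $L_{b+1} - U_b$ into three pieces. First, the main difference $\frac{b+3}{b+2}x_{b+1} - \frac{b+2}{b+1}x_b$ equals $\frac{b + 4 + 3/b - x_b}{(b+1)(b+2)}$ after using $x_{b+1} = x_b + 1/b$, hence is positive of order $1/b$ since $x_b = O(\log b)$. Second, the explicit Theorem~\ref{rpthm1} tail terms in $L_{b+1}$ and $U_b$ together contribute $O(\log(b)/b^2)$, dominated by the $k = 5$ piece $\frac{b^2}{b^4+1} x_b$ in the upper tail. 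Third, the ``middle'' contribution $(s_{b+1,3}-s_{b,3}) + (s_{b+1,4}-s_{b,4}) + s_{b+1,5}$. Using the bounds $y_b/b^{\lfloor k/2 \rfloor} \leq s_{b,k} \leq b^{\lceil (k-2)/2 \rceil} x_b/(b^{k-1}+1)$ from the proof of Theorem~\ref{rpthm1}, each $s_{b,k}$ is $O(\log(b)/b^{\lfloor k/2 \rfloor})$, so the $k = 4$ and $k = 5$ contributions are already $O(\log(b)/b^2)$ and absorbed by the main term.

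The main obstacle is the $k = 3$ term, where both $s_{b,3}$ and $s_{b+1,3}$ are of size $\Theta(\log(b)/b)$ and the naive Theorem~\ref{rpthm1} bounds give only $|s_{b+1,3} - s_{b,3}| = O(\log(b)/b)$, comparable to the main-term gain of $\Theta(1/b)$. To overcome this I plan to sharpen the estimate for $s_{b,3}$ by the substitution $n = ab + c$, which turns the defining sum into $s_{b,3} = \sum_{a=1}^{b-1} \sum_{n=ab}^{ab+b-1} 1/(bn + a)$, and then comparing $1/(bn+a)$ to $1/(bn)$ with a controlled $O(a/(bn)^2)$ error. Using $\sum_{a=1}^{b-1}(H_{ab+b-1}-H_{ab-1}) = \log b + O(1/b)$ (applying Euler--Maclaurin to $\log(1+1/a)$), this yields $s_{b,3} = (\log b)/b + O(1/b^2)$; the analogous estimate for $s_{b+1,3}$ then gives $s_{b+1,3} - s_{b,3} = O((\log b)/b^2)$, comfortably dominated by the $\Theta(1/b)$ from the main difference. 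Combining all three pieces gives $L_{b+1} > U_b$ for $b \geq b_0$, hence $s_{b+1} > s_b$, finishing the proof.
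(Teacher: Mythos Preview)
Your strategy is sound and yields a correct proof, but the treatment of the critical $k=3$ term is genuinely different from the paper's.

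The paper never derives an asymptotic formula for $s_{b,3}$. Instead, it proves directly that
\[
\sum_{k\geq 3} s_{b+1,k} - s_{b,3} \;>\; -\frac{5}{b^2}
\]
by writing both $s_{b+1,3}$ and $s_{b,3}$ as double sums over the digits $(a,c)$, splitting each into five or six sub-sums $A_i$, $B_j$, and then pairing the pieces term by term via the three elementary inequalities
\[
(aca)_{b+1} < (a(2a{+}c{+}1)a)_b,\qquad
(aca)_{b+1} < ((a{+}1)(2a{+}c{+}2{-}b)(a{+}1))_b,\qquad
(aca)_{b+1} < ((a{+}2)c(a{+}2))_b,
\]
each valid in an appropriate range of $a$ and $c$. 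A few leftover pieces are bounded crudely by $3/b^2$, $y_b/(b^2+1)$, and $2/b^2$, and the small deficit is recovered from $\sum_{k\ge 4} s_{b+1,k}$. This combinatorial matching is completely elementary (no Euler--Maclaurin, no harmonic-number expansion) and produces the explicit threshold $b\geq 50$, but at the cost of a long case split according to the parity of $b$.

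Your approach, by contrast, extracts the asymptotic $s_{b,3}=(\log b)/b + O(1/b^2)$ through the substitution $n=ab+c$ and the telescoping identity $\sum_{a=1}^{b-1}\log\frac{(a+1)b-1}{ab-1}=\log(b+1)$, which makes the main term exact; the difference $s_{b+1,3}-s_{b,3}$ then falls out as $O((\log b)/b^2)$. This is cleaner and gives an asymptotic of independent interest, but leaves the threshold $b_0$ implicit: finishing the proof requires either tracking all $O$-constants explicitly or running the numerical check from Theorem~\ref{rpthm1} over a possibly larger initial range than the paper's $2\le b\le 50$.
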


We need the following lemma in the proof of Theorem \ref{rpthm2}.

\begin{lemma}\label{rplemma1}
	Let $a$ and $b$ be integers satisfying $a<b$ and let $f$ be monotone on $[a,b]$. Then
	\begin{equation*}
		\min\{ f(a),f(b) \} \leq \sum_{n=a}^b f(n) - \int_{a}^b f(t) dt \leq \max\{f(a),f(b)\}.
	\end{equation*}
\end{lemma}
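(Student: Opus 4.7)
The plan is to split into the two monotonicity cases and compare the integral to the sum on each unit interval $[n,n+1]$, then telescope.

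First I would assume without loss of generality that $f$ is monotone increasing on $[a,b]$; the decreasing case will be entirely analogous with the roles of the endpoints swapped. For each integer $n$ with $a \leq n \leq b-1$, monotonicity gives $f(n) \leq f(t) \leq f(n+1)$ for all $t \in [n, n+1]$, so integrating yields
\begin{equation*}
f(n) \leq \int_n^{n+1} f(t)\,dt \leq f(n+1).
\end{equation*}

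Next I would sum this double inequality over $n = a, a+1, \ldots, b-1$ and use $\int_a^b f(t)\,dt = \sum_{n=a}^{b-1}\int_n^{n+1}f(t)\,dt$ to obtain
\begin{equation*}
\sum_{n=a}^{b-1} f(n) \leq \int_a^b f(t)\,dt \leq \sum_{n=a+1}^b f(n).
\end{equation*}
Subtracting each side from $\sum_{n=a}^b f(n)$, the two sums collapse by telescoping, leaving precisely $f(a) \leq \sum_{n=a}^b f(n) - \int_a^b f(t)\,dt \leq f(b)$. Since $f$ is increasing, $f(a) = \min\{f(a),f(b)\}$ and $f(b) = \max\{f(a),f(b)\}$, which is exactly the claim.

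For the decreasing case I would run the same argument with the inequalities reversed on each $[n,n+1]$; after telescoping this gives $f(b) \leq \sum_{n=a}^b f(n) - \int_a^b f(t)\,dt \leq f(a)$, and now $f(b) = \min\{f(a),f(b)\}$ and $f(a) = \max\{f(a),f(b)\}$, yielding the same conclusion.

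There is no real obstacle here: the lemma is a classical integral-comparison estimate and the only care required is to make sure the telescoping indices line up (one side drops the term at $a$, the other drops the term at $b$), and to keep track of which endpoint is the min and which is the max under each monotonicity hypothesis.
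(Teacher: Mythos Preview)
Your proof is correct and follows essentially the same approach as the paper: compare on each unit interval $[n,n+1]$, sum over $n=a,\ldots,b-1$, and telescope to isolate the endpoint terms, treating the decreasing case by symmetry. The paper's argument is identical in structure and detail.
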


\begin{proof}[Proof of Lemma \ref{rplemma1}] 
	This is a well-known result. See, for example, in the book by Nathanson \cite{Nath}. Nevertheless, we give a proof for completeness and for the reader's convenience. We first assume that $f$ is increasing on $[a,b]$. Then for $a\leq n\leq b-1$, we have
	\begin{equation*}
		f(n)\leq \int_n ^{n+1} f(t)dt \leq f(n+1).
	\end{equation*}
	Summing the above from $n=a$ to $n=b-1$, we obtain
	\begin{equation*}
		\sum_{n=a}^{b-1} f(n) \leq \int_a ^b f(t)dt\leq \sum_{n=a+1}^b f(n).
	\end{equation*}
	This implies that 
	\begin{equation*}
		\min\{f(a),f(b)\} = f(a) \leq \sum_{n=a}^b f(n) - \int_a ^b f(t)dt \leq f(b) = \max\{f(a),f(b)\}.
	\end{equation*}
	The proof is similar when $f$ is decreasing.
\end{proof}

\begin{proof}[Proof of Theorem \ref{rpthm2}] 
	We first apply Theorem \ref{rpthm1} with $\ell =5$ and $m=5$ and run the computation in a computer to obtain $s_b < s_{b'}$ for all $2\leq b < b' \leq 50$. Therefore it suffices to show that $s_b<s_{b+1}$ for $b\geq 50$. So we assume throughout the proof that $b\geq 50$. From \eqref{thm1eq1} in the proof of Theorem \ref{rpthm1}, we have 
	\begin{equation*}
		\sum_{k=1}^2 s_{b,k}  = x_b + \frac{x_b}{b+1} = \frac{b+2}{b+1}x_b.
	\end{equation*}
	Similarly, $\sum_{k=1}^2 s_{b+1,k} = \frac{b+3}{b+2}x_{b+1}$. Therefore
	\begin{align}\label{eq4}
		\sum_{k=1}^2 (s_{b+1,k} - s_{b,k}) &= \frac{b+3}{b+2}x_{b+1} - \frac{b+2}{b+1}x_b\nonumber\\
		&= \frac{b+3}{b+2}\left(x_b + \frac{1}{b}\right) - \frac{b+2}{b+1}x_b\nonumber\\
		&= \frac{1}{b} + \frac{1}{b(b+2)} - \frac{x_b}{(b+1)(b+2)}\nonumber\\
		&> \frac{1}{b} - \frac{x_b}{(b+1)(b+2)} > \frac{1}{b} - \frac{x_b}{b(b-1)}.
	\end{align}
	Similar to \eqref{thm1eq2} and \eqref{eq2} in the proof of Theorem \ref{rpthm1}, we obtain
	\begin{equation}\label{eq5}
		\sum_{k=4}^\infty s_{b,k} \leq \sum_{k=4}^\infty \frac{b^{\left\lceil\frac{k-2}{2}\right\rceil}x_b}{b^{k-1}+1} \leq \sum_{k=4}^\infty \frac{b^{\left\lceil\frac{k-2}{2}\right\rceil}x_b}{b^{k-1}} = \sum_{k=2}^\infty \frac{2x_b}{b^k} = \frac{2x_b}{b(b-1)}.
	\end{equation}
	By Lemma \ref{rplemma1}, we have 
	\begin{equation*}
		x_b = \sum_{n=1}^{b-1} \frac{1}{n} \leq 1 + \int_{1}^{b-1} \frac{1}{t} dt < 1 + \log b <2\log b.
	\end{equation*}
	Therefore we obtain from \eqref{eq4} and \eqref{eq5} that
	\begin{equation}\label{eq6}
		\sum_{k=1}^2 (s_{b+1,k} - s_{b,k}) - \sum_{k=4}^\infty s_{b,k} > \frac{1}{b} - \frac{3x_b}{b(b-1)} > \frac{1}{b} - \frac{6\log b}{b(b-1)}.
	\end{equation}
	Next, we will show that 
	\begin{equation}\label{thm2eqprove}
		\left(\sum_{k=3}^{\infty} s_{b+1,k} \right) - s_{b,3} \geq -\frac{5}{b^2}.
	\end{equation}
	Before we prove \eqref{thm2eqprove}, we need to verify the following inequalities. For $1\leq a \leq \left\lfloor\frac{b}{2}\right\rfloor -1$ and $0\leq c \leq b-(2a+2)$,
	\begin{equation}\label{thm2eq1}
		\frac{1}{(aca)_{b+1}} - \frac{1}{(a(2a+c+1)a)_b} > 0.
	\end{equation}
	For $1\leq a \leq \left\lfloor\frac{b}{2}\right\rfloor -1$ and $b-(2a+1)\leq c \leq b$,
	\begin{equation}\label{thm2eq2}
		\frac{1}{(aca)}_{b+1} - \frac{1}{((a+1)(2a+c+2-b)(a+1))_b} > 0.
	\end{equation}
	For $\left\lfloor\frac{b}{2}\right\rfloor \leq a \leq b-3$ and $0\leq c \leq b-1$,
	\begin{equation}\label{thm2eq3}
		\frac{1}{(aca)_{b+1}} - \frac{1}{((a+2)c(a+2))_b} > 0.
	\end{equation}
	Note that in \eqref{thm2eq2}, if $b$ is even, $a=\frac{b}{2}-1$, and $c=b$, then $2a+c+2-b=b$ and the expression
	\begin{equation*}
		((a+1)(2a+c+2-b)(a+1))_b = ((a+1)b(a+1))_b
	\end{equation*}
	is not a standard representation of the number in base $b$. In this case, for convenience, we still define $(a_k a_{k-1} \ldots a_1 a_0)_b = a_k b^k + a_{k-1}b^{k-1} + \cdots a_0$ where $a_i$'s are not necessarily less than $b$. For \eqref{thm2eq1}, the difference between the denominators in the first and second fractions is 
	\begin{equation*}
		a(b+1)^2 +c(b+1)+a - (ab^2 + (2a+c+1)b +a) = -b + a + c \leq - a -2 < 0,
	\end{equation*}
	where the first inequality is obtained from the assumption that $c \leq b - (2a+2)$. Therefore
	\begin{equation*}
		\frac{1}{(aca)_{b+1}} = \frac{1}{a(b+1)^2 +c(b+1)+a} > \frac{1}{ab^2 + (2a+c+1)b +a} = \frac{1}{(a(2a+c+1)a)_b}.
	\end{equation*}
	This proves \eqref{thm2eq1}. Similarly, the difference between the first and second denominators in \eqref{thm2eq2} is 
	\begin{align*}
		-2b + a + c - 1 \leq - b + a - 1 \leq  -b + \left\lfloor\frac{b}{2}\right\rfloor - 2 < 0,
	\end{align*}
	which implies \eqref{thm2eq2}. For \eqref{thm2eq3}, the difference between the first and second denominators in \eqref{thm2eq3} is
	\begin{align*}
		-2b^2 + 2ab + a + c - 2 \leq -2b^2 + 2(b-3)b + (b-3) + (b-1) - 2 =  -4b - 6 < 0,
	\end{align*}
	which implies \eqref{thm2eq3}. Now we are ready to prove \eqref{thm2eqprove}. Similar to \eqref{eq3} in the proof of Theorem \ref{rpthm1}, we have
	\begin{equation}\label{thm2eq4}
		\sum_{k=4}^\infty s_{b+1,k} \geq \sum_{k=4}^\infty \frac{y_{b+1}}{(b+1)^{\left\lfloor\frac{k}{2}\right\rfloor}} = 2y_{b+1}\sum_{k=2}^\infty \frac{1}{(b+1)^k} \geq \frac{2y_b}{b(b+1)}.
	\end{equation} 
	Next, we divide the proof into two cases depending on the parity of $b$.
	
	\noindent{\bf Case 1.} $b$ is odd. We write $s_{b+1,3}$ as
	\begin{align*}
		s_{b+1,3} &= \sum_{a=1}^b \sum_{c=0}^b \frac{1}{(aca)_{b+1}} \\
		&= \sum_{a=1}^{\frac{b-3}{2}}\sum_{c=0}^{b-2a-2} \frac{1}{(aca)_{b+1}} + \sum_{a=1}^{\frac{b-3}{2}}\sum_{c=b-2a-1}^{b} \frac{1}{(aca)_{b+1}} + \sum_{a=\frac{b-1}{2}}^{b-3}\sum_{c=0}^{b-1} \frac{1}{(aca)_{b+1}} + \sum_{a=\frac{b-1}{2}}^{b-3} \frac{1}{(aba)_{b+1}}\\
		&\phantom{=} +\sum_{a=b-2}^{b}\sum_{c=0}^b \frac{1}{(aca)_{b+1}}\\
		&= A_1 + A_2 + A_3 + A_4 + A_5, \text{ say}.
	\end{align*}
	In addition, we write $s_{b,3}$ as
	\begin{align*}
	s_{b,3} &= \sum_{a=1}^{b-1} \sum_{c=0}^{b-1} \frac{1}{(aca)_b}\\
	&=\sum_{c=0}^2 \frac{1}{(1c1)_b} + \sum_{a=2}^{\frac{b-1}{2}} \frac{1}{(a0a)_b} + \sum_{a=2}^{\frac{b-1}{2}}\sum_{c=1}^{2a} \frac{1}{(aca)_b} + \sum_{a=1}^{\frac{b-1}{2}}\sum_{c=2a+1}^{b-1} \frac{1}{(aca)_b} + \sum_{c=0}^{b-1} \frac{1}{\left( \left(\frac{b+1}{2}\right)c\left(\frac{b+1}{2}\right) \right)_b}\\ 
	&\phantom{=} + \sum_{a=\frac{b+3}{2}}^{b-1}\sum_{c=0}^{b-1} \frac{1}{(aca)_{b}}\\ 
	&=B_1 + B_2 + B_3 + B_4 + B_5 + B_6, \text{ say}.
	\end{align*}
	We write the difference $s_{b+1,3} - s_{b,3}$ as 
	\begin{equation}\label{thm2eq5}
		(A_1 -  B_4) + (A_2 - B_3) + (A_3  - B_6) + A_4 + A_5 - B_1 - B_2 - B_5. 
	\end{equation}
 	Observe that the following inequalities holds.
 	\begin{align}
 		A_4 &> 0, \quad A_5 >0,\nonumber\\
 		B_1 &= \sum_{c=0}^2 \frac{1}{(1c1)_b} = \frac{1}{(101)_b} + \frac{1}{(111)_b} + \frac{1}{(121)_b} < \frac{3}{(100)_b} = \frac{3}{b^2},\label{thm2eq6}\\
 		B_2 &= \sum_{a=2}^{\frac{b-1}{2}} \frac{1}{(a0a)_b} = \frac{1}{b^2 +1}\sum_{a=2}^{\frac{b-1}{2}} \frac{1}{a}  < \frac{1}{b^2 + 1}\sum_{a=2}^{b} \frac{1}{a} = \frac{y_b}{b^2 +1},\label{thm2eq7}\\
 		B_5 &= \sum_{c=0}^{b-1} \frac{1}{\left( \left(\frac{b+1}{2}\right)c\left(\frac{b+1}{2}\right) \right)_b} < \sum_{c=0}^{b-1} \frac{1}{\left( \left(\frac{b+1}{2}\right)00 \right)_b} = \frac{b}{\left( \left(\frac{b+1}{2}\right)00 \right)_b} < \frac{2}{b^2}.\label{thm2eq8}
 	\end{align}
 	Therefore \eqref{thm2eq5} is greater than
 	\begin{equation}\label{thm2eq9}
 		(A_1 -  B_4) + (A_2 - B_3) + (A_3  - B_6) - \frac{5}{b^2} - \frac{y_b}{b^2 + 1}.
 	\end{equation}
 	When $a=\frac{b-1}{2}$, the sum $\sum_{c=2a+1}^{b-1} \frac{1}{(aca)_b}$ in $B_4$ is empty. From this and the change of variable from $c$ to $c+2a+1$,  we obtain
 	\begin{equation*}
 		B_4 = \sum_{a=1}^{\frac{b-3}{2}}\sum_{c=2a+1}^{b-1} \frac{1}{(aca)_b} = \sum_{a=1}^{\frac{b-3}{2}} \sum_{c=0}^{b-2a-2} \frac{1}{(a(2a+c+1)a)_b}.
 	\end{equation*}
 	We see that 
 	\begin{equation*}
 		A_1 - B_4 = \sum_{a=1}^{\frac{b-3}{2}} \sum_{c=0}^{b-2a-2} \left( \frac{1}{(aca)_{b+1}} - \frac{1}{(a(2a+c+1)a)_b} \right).
 	\end{equation*}
 	Similarly, by replacing the variables $a$ by $a+1$ and $c$ by $2a+c+2-b$ in $B_3$, we see that
 	\begin{equation*}
 		A_2 - B_3 = \sum_{a=1}^{\frac{b-3}{2}}\sum_{c=b-2a-1}^b \left( \frac{1}{(aca)}_{b+1} - \frac{1}{((a+1)(2a+c+2-b)(a+1))_b} \right),
 	\end{equation*}
 	and by changing $a$ to $a+2$ in $B_6$, we obtain
 	\begin{equation*}
 		A_3 - B_6 \sum_{a=\frac{b-1}{2}}^{b-3}\sum_{c=0}^{b-1} \left( \frac{1}{(aca)_{b+1}} - \frac{1}{((a+2)c(a+2))_b} \right).
 	\end{equation*}
 	By \eqref{thm2eq1}, \eqref{thm2eq2}, and \eqref{thm2eq3}, respectively, we have
 	\begin{equation}\label{thm2eq10}
 		A_1 - B_4 > 0, \quad A_2 - B_3 >0, \quad\text{and}\quad A_3 - B_6>0.
 	\end{equation}
 	Then we obtain by \eqref{thm2eq5}, \eqref{thm2eq9}, and \eqref{thm2eq10} that $s_{b+1,3} - s_{b,3}$ is greater than $-\frac{5}{b^2} - \frac{y_b}{b^2 +1}$. From this and \eqref{thm2eq4}, we have
 	\begin{align*}
	 	\sum_{k=3}^{\infty} s_{b+1,k} - s_{b,3} = s_{b+1,3} - s_{b,3} + \sum_{k=4}^{\infty} s_{b+1,k}  &> -\frac{5}{b^2} - \frac{y_b}{b^2 +1} + \frac{2y_b}{b(b+1)}\\
	 	&= -\frac{5}{b^2}  + \frac{b^2 - b +2}{b(b+1)(b^2+1)}y_b,
 	\end{align*}
	which is larger than $-\frac{5}{b^2}$. Therefore \eqref{thm2eqprove} is proved as desired.
	
	\noindent{\bf Case 2.} $b$ is even. The argument in this case is similar to Case 1, so we omit some details. We express $s_{b+1,3}$ as
	\begin{align*}
		s_{b+1,3} &= \sum_{a=1}^{\frac{b-2}{2}}\sum_{c=0}^{b-2a-2} \frac{1}{(aca)_{b+1}} + \sum_{a=1}^{\frac{b-2}{2}}\sum_{c=b-2a-1}^{b} \frac{1}{(aca)_{b+1}} + \sum_{a=\frac{b}{2}}^{b-3}\sum_{c=0}^{b-1} \frac{1}{(aca)_{b+1}} + \sum_{a=\frac{b}{2}}^{b-3} \frac{1}{(aba)_{b+1}}\\
		&\phantom{=} +\sum_{a=b-2}^{b}\sum_{c=0}^b \frac{1}{(aca)_{b+1}}\\
		&> \sum_{a=1}^{\frac{b-2}{2}}\sum_{c=0}^{b-2a-2} \frac{1}{(aca)_{b+1}} + \sum_{a=1}^{\frac{b-2}{2}}\sum_{c=b-2a-1}^{b} \frac{1}{(aca)_{b+1}} + \sum_{a=\frac{b}{2}}^{b-3}\sum_{c=0}^{b-1} \frac{1}{(aca)_{b+1}}\\
		&= A_1 + A_2 + A_3, \text{ say}.
	\end{align*}
	In writing $s_{b,3}$, we still use our convention that $(a_3 a_2 a_1)_b = a_3 b^2 + a_2 b + a_1$ for $0\leq a_3 , a_2 , a_1 \leq b$. Then we have
	\begin{align*}
		s_{b,3} &=\sum_{c=0}^2 \frac{1}{(1c1)_b} + \sum_{a=2}^{\frac{b}{2}} \frac{1}{(a0a)_b} + \left(\left(\sum_{a=2}^{\frac{b}{2}}\sum_{c=1}^{2a} \frac{1}{(aca)_b}\right) - \frac{1}{\left(\left(\frac{b}{2}\right)b\left(\frac{b}{2}\right)\right)_b}\right) + \sum_{a=1}^{\frac{b}{2}}\sum_{c=2a+1}^{b-1} \frac{1}{(aca)_b}\\
		&\phantom{=} + \sum_{c=0}^{b-1} \frac{1}{\left( \left(\frac{b+2}{2}\right)c\left(\frac{b+2}{2}\right) \right)_b}  + \sum_{a=\frac{b+4}{2}}^{b-1}\sum_{c=0}^{b-1} \frac{1}{(aca)_{b}}.
	\end{align*}
	Similar to \eqref{thm2eq6}, \eqref{thm2eq7}, and \eqref{thm2eq8}, $s_{b,3}$ is less than
	\begin{align*}
		&\frac{5}{b^2} + \frac{y_b}{b^2 + 1} + \sum_{a=2}^{\frac{b}{2}}\sum_{c=1}^{2a} \frac{1}{(aca)_b} + \sum_{a=1}^{\frac{b}{2}}\sum_{c=2a+1}^{b-1} \frac{1}{(aca)_b} + \sum_{a=\frac{b+4}{2}}^{b-1}\sum_{c=0}^{b-1} \frac{1}{(aca)_{b}}\\
		&=\frac{5}{b^2} + \frac{y_b}{b^2 + 1} + B_1 + B_2 + B_3, \text{ say}.
	\end{align*}
	When $a=\frac{b}{2}$, the sum $\sum_{c=2a+1}^{b-1} \frac{1}{(aca)_b}$ in $B_2$ is empty. So
	\begin{equation*}
		B_2 = \sum_{a=1}^{\frac{b-2}{2}}\sum_{c=2a+1}^{b-1} \frac{1}{(aca)_b} = \sum_{a=1}^{\frac{b-2}{2}} \sum_{c=0}^{b-2a-2} \frac{1}{(a(2a+c+1)a)_b}.
	\end{equation*}
	Then the difference $s_{b+1,3} - s_{b,3}$ is larger than
	\begin{equation*}
		(A_1 - B_2) + (A_2 - B_1) + (A_3 - B_3) - \frac{5}{b^2} - \frac{y_b}{b^2 +1}
	\end{equation*}
	Similar to case 1, by \eqref{thm2eq1}, \eqref{thm2eq2}, and \eqref{thm2eq3}, respectively, we have
	\begin{equation*}
		A_1 - B_2 > 0, \quad A_2 - B_1 > 0, \quad \text{and} \quad A_3 - B_3 >0.
	\end{equation*}
	Thus 
	\begin{equation*}
		s_{b+1,3} - s_{b,3} > - \frac{5}{b^2} - \frac{y_b}{b^2 +1}.
	\end{equation*}
	Similar to Case 1, we therefore obtain by \eqref{thm2eq4} that
	\begin{equation*}
		\sum_{k=3}^{\infty} s_{b+1,k} - s_{b,3} = s_{b+1,3} - s_{b,3} + \sum_{k=4}^{\infty} s_{b+1,k}  > -\frac{5}{b^2} - \frac{y_b}{b^2 +1} + \frac{2y_b}{b(b+1)} > -\frac{5}{b^2}.
	\end{equation*}
 	Hence, in any cases, \eqref{thm2eqprove} is verified. From \eqref{eq6} and \eqref{thm2eqprove}, we obtain
	\begin{equation}\label{thm2eq11}
		s_{b+1} - s_b = \left(\sum_{k=1}^2 (s_{b+1,k} - s_{b,k}) - \sum_{k=4}^\infty s_{b,k}\right) + \left(\sum_{k=3}^{\infty} s_{b+1,k} - s_{b,3}\right) > \frac{1}{b} - \frac{6\log b}{b(b-1)} - \frac{5}{b^2}.
	\end{equation}
	Observe that 
	\begin{equation*}
		\frac{d}{dx}\left( \frac{6\log x}{x-1} \right) = -\frac{6(x\log x-x+1)}{x(x-1)^2},
	\end{equation*}
	and $-6(x\log x-x+1)< 0$ for all $x\geq 50$. So the function $x\mapsto \frac{6\log x}{x-1}$ is decreasing on $[50,\infty)$. Since $b\geq 50$,
	\begin{equation*}
		\frac{6\log b}{b-1} \leq \frac{6\log 50}{49} <\frac{1}{2} \quad \text{and} \quad \frac{5}{b} <\frac{1}{3}.
	\end{equation*}
	Hence we obtain from \eqref{thm2eq11} that
	\begin{equation*}
		s_{b-1} - s_b > \frac{1}{b} - \frac{6\log b}{b(b-1)} - \frac{5}{b^2} > \frac{1}{b} - \frac{1}{2b} - \frac{1}{3b} > 0.
	\end{equation*}
	This  completes the proof.
\end{proof}

Next we give some consequences of our main theorems.

\begin{corollary}\label{rpcor2}
	Let $L_{b}$ and $U_b$ be, respectively, the lower and upper bounds for $s_b$ given in Corollary \ref{rpcor1}. Then $U_b - L_b$ converges to zero as $b\rightarrow \infty$. In addition, uniformly for $b\geq 2$,
	\begin{equation*}
		s_b = \frac{b+2}{b+1}\left( \log b + \gamma \right) + O\left(\frac{\log b}{b}\right),
	\end{equation*}
	where $\gamma$ is Euler's constant.
\end{corollary}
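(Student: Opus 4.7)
The plan is to read both statements off Corollary \ref{rpcor1} by elementary bookkeeping. For the first claim, I would subtract the two explicit bounds to obtain
\[
U_b - L_b = \left(\frac{2}{b(b-1)}+\frac{b}{b^2+1}\right) x_b - \frac{2 y_b}{b(b-1)},
\]
and use the trivial estimate $x_b, y_b = O(\log b)$, which follows from Lemma \ref{rplemma1} applied to $f(t) = 1/t$. The dominant summand is $\frac{b}{b^2+1}\, x_b = O((\log b)/b)$, while the other two are $O((\log b)/b^2)$, so $U_b - L_b = O((\log b)/b) \to 0$.

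For the asymptotic formula, I would invoke the classical estimate $x_b = H_{b-1} = \log b + \gamma + O(1/b)$, with an explicit constant valid for all $b \geq 2$. Substituting into $L_b$ and $U_b$ and absorbing the corrective terms $\frac{2 y_b}{b(b-1)}$, $\frac{2 x_b}{b(b-1)}$, and $\frac{b\, x_b}{b^2+1}$ (each of size at most $O((\log b)/b)$) together with the multiplicative error $\frac{b+2}{b+1}\cdot O(1/b) = O(1/b)$, both $L_b$ and $U_b$ take the shape
\[
\frac{b+2}{b+1}(\log b + \gamma) + O\!\left(\frac{\log b}{b}\right).
\]
Squeezing $L_b \leq s_b \leq U_b$ then yields the claimed asymptotic.

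Uniformity for $b \geq 2$ is essentially automatic: Lemma \ref{rplemma1} and the classical Euler-constant estimate both have explicit constants that work from $b=2$ on, and any finitely many exceptional small values can be absorbed into the implicit $O$-constant. I do not anticipate a serious obstacle; the only point needing a moment of care is verifying that $\frac{b}{b^2+1}\, x_b = O((\log b)/b)$ rather than something larger, which is immediate since $\frac{b}{b^2+1} \sim \frac{1}{b}$.
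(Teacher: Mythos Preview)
Your proposal is correct and follows essentially the same approach as the paper: subtract the bounds from Corollary~\ref{rpcor1}, estimate $x_b$ (and $y_b$) via Lemma~\ref{rplemma1} and the harmonic-number asymptotic $H_{b-1}=\log b+\gamma+O(1/b)$, and squeeze. The only cosmetic difference is that the paper simplifies $\frac{2(x_b-y_b)}{b(b-1)}=\frac{2}{b^2}$ explicitly (since $x_b-y_b=1-\tfrac{1}{b}$) and then computes the asymptotic only for $L_b$, using $s_b=L_b+O((\log b)/b)$, rather than doing both bounds and squeezing.
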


\begin{proof}
	We have 
	\begin{equation*}
		U_b - L_b = \frac{2(x_b - y_b)}{b(b-1)} + \frac{bx_b}{b^2 + 1} = \frac{2}{b^2} + \frac{bx_b}{b^2 +1}.
	\end{equation*}
	By Lemma \ref{rplemma1}, we see that $x_b \leq 1 + \log b$. Therefore 
	\begin{equation*}
		0\leq U_b - L_b \leq \frac{2}{b^2} + \frac{1 + \log b}{b} \ll \frac{\log b}{b} \rightarrow 0 \quad \text{as $b\rightarrow \infty$}.
	\end{equation*}
	This proves the first assertion. In addition, we have 
	\begin{equation*}
		0 \leq s_b - L_b \leq U_b - L_b \ll \frac{\log b}{b}.
	\end{equation*}
	So 
	\begin{equation}\label{cor2eq1}
	s_b = L_b + O\left(\frac{\log b}{b}\right).
	\end{equation}
	Recall a well-known formula (see for instance in \cite{Nath}),
	\begin{equation*}
		\sum_{n\leq x} \frac{1}{n} = \log x + \gamma + O\left( \frac{1}{x} \right).
	\end{equation*}
	Therefore
	\begin{equation*}
		L_b = \frac{b+2}{b+1}\left( \log (b-1) + \gamma + O\left( \frac{1}{b-1} \right) \right) + \frac{2}{b(b-1)}\left( \log b + \gamma - 1 + O\left( \frac{1}{b} \right) \right).
	\end{equation*}
	Since $\log (b-1) = \log b + O\left(\frac{1}{b}\right)$, we see that
	\begin{equation}\label{cor2eq2}
	L_b = \frac{b+2}{b+1}\left( \log b + \gamma\right) + O\left(\frac{1}{b}\right).
	\end{equation}
	From \eqref{cor2eq1} and \eqref{cor2eq2}, we obtain the second assertion.
\end{proof}

\begin{corollary}\label{rpcor3}
	The sequence $(s_b)_{b\geq 2}$ is strictly increasing and diverges to $+\infty$ as $b \rightarrow \infty$.
\end{corollary}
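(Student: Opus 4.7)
The first assertion, strict monotonicity of $(s_b)_{b\geq 2}$, is nothing other than Theorem \ref{rpthm2} applied to consecutive bases $b$ and $b+1$, so it requires no further work.

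For the divergence to $+\infty$, the plan is to invoke a lower bound for $s_b$ whose right-hand side already diverges. The cleanest route is to use Corollary \ref{rpcor1}: since both $\frac{b+2}{b+1}\geq 1$ and $\frac{2y_b}{b(b-1)}\geq 0$, we obtain
\begin{equation*}
s_b \;\geq\; \frac{b+2}{b+1}x_b + \frac{2y_b}{b(b-1)} \;\geq\; x_b \;=\; \sum_{a=1}^{b-1}\frac{1}{a}.
\end{equation*}
The harmonic sum on the right tends to $+\infty$ as $b\to\infty$, hence so does $s_b$.

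Alternatively one could read off divergence directly from the asymptotic formula in Corollary \ref{rpcor2}, namely $s_b = \frac{b+2}{b+1}(\log b + \gamma) + O(\log b/b)$, whose main term behaves like $\log b$. Either route is short, and there is no real obstacle: the heavy lifting is already in Theorem \ref{rpthm2} and in the lower bound of Corollary \ref{rpcor1}. The only thing to be careful about is not to quote a stronger conclusion than needed — the trivial observation $x_b\to\infty$ is enough and avoids any appeal to the refined asymptotics.
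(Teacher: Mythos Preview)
Your proof is correct and essentially matches the paper's one-line argument: the paper simply cites Theorem~\ref{rpthm2} for strict monotonicity and Corollary~\ref{rpcor2} for divergence, which is precisely the alternative you mention. Your primary route via Corollary~\ref{rpcor1} and the bound $s_b\geq x_b$ is a slightly more elementary variant (it avoids the asymptotic with $\gamma$), and in fact one could go further and note that $s_b\geq s_{b,1}=x_b$ holds trivially without invoking any earlier estimate at all.
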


\begin{proof}
	This follows immediately from Theorem \ref{rpthm2} and Corollary \ref{rpcor2}.
\end{proof}

\begin{corollary}\label{rpcor4}
	The sequence $(s_b - s_{b-1})_{b\geq 3}$ converges to zero as $b \rightarrow \infty$.
\end{corollary}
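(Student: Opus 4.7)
The plan is to deduce Corollary \ref{rpcor4} directly from the asymptotic formula
\[
s_b = \frac{b+2}{b+1}(\log b + \gamma) + O\!\left(\frac{\log b}{b}\right)
\]
established in Corollary \ref{rpcor2}, by expanding the difference $s_b - s_{b-1}$ term by term and checking that each piece tends to zero.

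First, I would apply Corollary \ref{rpcor2} to both $s_b$ and $s_{b-1}$; since the $O$-constant there is uniform in $b$, subtracting gives
\[
s_b - s_{b-1} = \frac{b+2}{b+1}(\log b + \gamma) - \frac{b+1}{b}(\log(b-1) + \gamma) + O\!\left(\frac{\log b}{b}\right).
\]
Next, I would handle the main term via the standard telescoping identity $f(b)g(b) - f(b-1)g(b-1) = f(b)\bigl(g(b)-g(b-1)\bigr) + g(b-1)\bigl(f(b)-f(b-1)\bigr)$, with $f(b) = \frac{b+2}{b+1}$ and $g(b) = \log b + \gamma$. A short computation gives $f(b) - f(b-1) = \frac{b+2}{b+1} - \frac{b+1}{b} = -\frac{1}{b(b+1)}$, and $g(b) - g(b-1) = \log\!\bigl(1 + \tfrac{1}{b-1}\bigr)$.

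Now I can estimate each contribution. The factor $f(b)$ is bounded (it lies in $(1,2)$), while $\log\!\bigl(1+\tfrac{1}{b-1}\bigr) = O(1/b)$, so the first piece is $O(1/b)$. For the second piece, $g(b-1) = O(\log b)$ is multiplied by $-\frac{1}{b(b+1)} = O(1/b^2)$, giving $O(\log b / b^2)$. Combined with the $O(\log b / b)$ error term from Corollary \ref{rpcor2}, we conclude
\[
s_b - s_{b-1} = O\!\left(\frac{\log b}{b}\right) \longrightarrow 0 \quad\text{as } b\to\infty,
\]
which is the desired statement.

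There is no real obstacle here: the entire argument is a routine asymptotic manipulation, and the only thing to be mildly careful about is keeping the implied constants uniform in $b$, which is guaranteed by the uniformity statement in Corollary \ref{rpcor2}. I might also remark that combining Corollary \ref{rpcor4} with Corollary \ref{rpcor3} shows that the gaps $s_b - s_{b-1}$ are positive but vanish in the limit, which was the motivation in the introduction for viewing Theorem \ref{rpthm2} as nontrivial.
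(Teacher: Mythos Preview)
Your proof is correct and follows essentially the same route as the paper: both apply the asymptotic formula from Corollary~\ref{rpcor2} to $s_b$ and $s_{b-1}$ and estimate the resulting difference as $O(\log b/b)$. The only cosmetic distinction is that the paper first absorbs $\log(b-1)=\log b+O(1/b)$ to collapse the main term directly to $\bigl(\tfrac{b+2}{b+1}-\tfrac{b+1}{b}\bigr)(\log b+\gamma)$, whereas you reach the same bound via the telescoping identity.
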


\begin{proof}
	Recall that $\log (b-1) = \log b + O\left(\frac{1}{b}\right)$. So we obtain by Theorem \ref{rpthm2} and \ref{rpcor2} that, for $b\geq 3$,
	\begin{equation*}
		0 < s_b - s_{b-1} = \left( \frac{b+2}{b+1} - \frac{b+1}{b} \right)\left( \log b + \gamma \right) + O\left( \frac{\log (b-1)}{b-1} \right) \ll \frac{\log (b-1)}{b-1},
	\end{equation*}
	which implies our assertion.
\end{proof}

\section{Numerical Data}
In this section, we give some comments on the values of $s_b ^2 - s_{b-1}s_{b+1}$ for each $b\geq 2$. Recall that a sequence $(a_n)_{n\geq 0}$ is said to be log--concave if $a_n ^2 - a_{n-1}a_{n+1} > 0$ for every $n\geq 1$ and is said to be log--convex if $a_n ^2 - a_{n-1}a_{n+1} < 0$ for every $n\geq 1$. For a survey article concerning log--concavity and log--convexity of sequences, we refer the reader to Stanley \cite{Stan}. See also Pongsriiam \cite{Pong} for some combinatorial sequences which are log--concave or log--convex, and some open problems concerning log--property of a certain sequence.

\begin{table}[h!]
	\centering
	\caption{The approximated values of $L(b)$, $M(b)$, and $U(b)$.}
	\begin{tabular}{|c|c|c|c|}
		\hline
		b                        & L(b)                  & M(b)                  & U(b)        \\ \hline
		\ 3 \       &\ \  --0.62050401 \ \     &\ \ 0.18128669  \ \    &\ \ 0.98088799  \ \  \\
		 4       &\ \  --0.27694197 \ \      &\ \ 0.10156088 \ \     &\ \ 0.47976680 \ \  \\
		 5       &\ \  --0.15303980 \ \     &\ \ 0.06918746 \ \     &\ \ 0.29135060 \ \  \\
		 6       &\ \  --0.09583068 \ \     &\ \ 0.05134357  \ \    &\ \ 0.19849920 \ \  \\
		 7       &\ \ --0.06499252 \ \     &\ \ 0.04017479 \ \     &\ \ 0.14533547 \ \  \\
		 8     &\ \ --0.04658631 \ \     &\ \ 0.03260281 \ \     &\ \ 0.11178921 \ \  \\
		 9      &\ \ --0.03478306 \ \     &\ \ 0.02717043 \ \     &\ \ 0.08912266 \ \  \\
		10     &\ \ --0.02679941 \ \    &\ \  0.02310516 \ \    &\ \  0.07300909 \ \  \\
		11      &\ \ --0.02117156 \ \    &\ \  0.01996245 \ \    &\ \  0.06109613 \ \  \\
		12      &\ \ --0.01707119 \ \    &\ \  0.01746963 \ \    &\ \  0.05201026 \ \  \\
		13      &\ \ --0.01400177 \ \    &\ \  0.01545069 \ \    &\ \  0.04490303 \ \  \\
		14      &\ \ --0.01165154 \ \    &\ \  0.01378717 \ \    &\ \  0.03922582 \ \  \\
		15      &\ \ --0.00981708 \ \    &\ \  0.01239658 \ \    &\ \  0.03461020 \ \  \\
		16      &\ \ --0.00836134 \ \    &\ \  0.01121975 \ \    &\ \  0.03080080 \ \  \\
		17      &\ \ --0.00718938 \ \    &\ \  0.01021318 \ \    &\ \  0.02761573 \ \   \\
		18      &\ \ --0.00623386  \ \   &\ \  0.00934426 \ \    &\ \  0.02492236 \ \  \\
		19      &\ \ --0.00544602 \ \    &\ \  0.00858800 \ \    &\ \  0.02262202 \ \  \\
		20      &\ \ --0.00478989 \ \    &\ \  0.00792504 \ \    &\ \  0.02063996 \ \  \\  \hline       
	\end{tabular}
\end{table}

Since we do not know the exact value of $s_b$, it is difficult to determine if the sequence $(s_b)_{b\geq 2}$ is log--concave or log--convex, or neither. Nevertheless, we can use Theorem \ref{rpthm1} to estimate $s_b ^2 -s_{b-1}s_{b+1}$ and use numerical data to predict the log--concavity of the sequence $(s_b)_{b\geq 2}$. For each $b\geq 2$, let $\alpha_b$ and $\beta_b$ be the upper and lower bounds of $b$ given in Theorem \ref{rpthm1} with $\ell = 5$ and $m = 5$, respectively. In addition, for each $b\geq 3$, we define
\begin{align*}
	L(b) &= \beta_b ^2 - \alpha_{b-1}\alpha_{b+1}, \\
	M(b) &= \left(\frac{\alpha_b + \beta_b}{2}\right) ^2 - \left(\frac{\alpha_{b-1} + \beta_{b-1}}{2}\right)\left(\frac{\alpha_{b+1} + \beta_{b+1}}{2}\right),\\
	U(b) &= \alpha_b ^2 - \beta_{b-1}\beta_{b+1}.
\end{align*}
Therefore $L(b)\leq s_b ^2 - s_{b-1}s_{b+1} \leq U(b)$ and we expect that $M(b)$ should be a good approximation for $s_b ^2 -s_{b-1}s_{b+1}$. The table of approximated values of $L(b)$, $M(b)$, and $U(b)$ is given above.

We see that for each $3 \leq b \leq 20$, $M(b)$ is positive. In fact, we check by using MATLAB that $M(b)>0$ for $2\leq b\leq 500$. So if $s_b ^2 -s_{b-1}s_{b+1}$ is very closed to $M(b)$, then we guess that it should also be positive. So we think that the sequence $(s_b)_{b\geq 2}$ is log--concave. Nevertheless, we do not have a proof of this. So we leave this for a future research and we do not mind if the reader will solve it. We also plan to put more data in the second author's ResearchGate website \cite{Pong2}, which everyone can visit and freely  download the data.

\begin{conjecture}
	The sequence $(s_b)_{b\geq 2}$ is log--concave.
\end{conjecture}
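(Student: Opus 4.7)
The plan is to prove the stronger claim that the sequence $(s_b)_{b \geq 2}$ is \emph{concave}, meaning $\Delta_b := s_b - s_{b-1}$ is non-increasing in $b$. From the elementary algebraic identity
\begin{equation*}
s_b^2 - s_{b-1}s_{b+1} = s_b(\Delta_b - \Delta_{b+1}) + \Delta_b \Delta_{b+1},
\end{equation*}
and the fact (Theorem \ref{rpthm2}) that $\Delta_b, \Delta_{b+1} > 0$, concavity would immediately give log-concavity. The numerical data strongly supports this reduction: in the table we see $M(b)$ behaving like $\log b / b^2$, while $\Delta_b \Delta_{b+1} = O(1/b^2)$, so the bulk of $s_b^2 - s_{b-1}s_{b+1}$ should come from $s_b(\Delta_b - \Delta_{b+1})$ with $s_b \sim \log b$ forcing $\Delta_b - \Delta_{b+1} \sim 1/b^2 > 0$.

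To establish concavity I would refine the asymptotic expansion of $s_b$ well beyond what Corollary \ref{rpcor2} provides. Starting from the decomposition $s_b = \frac{b+2}{b+1}x_b + s_{b,3} + \sum_{k \geq 4} s_{b,k}$, the key step is to evaluate $s_{b,3}$ precisely: for each fixed $a$, approximate $\sum_{c=0}^{b-1} \frac{1}{ab^2 + cb + a}$ by an integral (via Lemma \ref{rplemma1}) to obtain $\frac{1}{b}\log\frac{a+1}{a} + O(1/(ab^2))$, and then apply the telescoping identity $\sum_{a=1}^{b-1} \log\frac{a+1}{a} = \log b$ to get $s_{b,3} = \frac{\log b}{b} + O(\log b / b^2)$. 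Combined with the sharper harmonic expansion $x_b = \log b + \gamma - \frac{1}{2b} + O(1/b^2)$ and the bound $\sum_{k \geq 4}s_{b,k} = O(\log b / b^2)$ (obtained by extending \eqref{eq2}), this yields
\begin{equation*}
s_b = \log b + \gamma + \frac{2\log b + C}{b} + O\!\left(\frac{\log b}{b^2}\right)
\end{equation*}
for an explicit constant $C$. For small bases, say $b \leq 100$, the residual concavity can be checked numerically by applying Theorem \ref{rpthm1} with $\ell$ and $m$ taken large enough to make the gap $U_b - L_b$ much smaller than the predicted size of $M(b)$.

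The principal obstacle is that even this level of precision is \emph{insufficient}. Differencing the expansion above yields $\Delta_b = \frac{1}{b} + O(\log b / b^2)$ and hence $\Delta_b - \Delta_{b+1} = \frac{1}{b(b+1)} + O(\log b / b^2)$, in which the error has the same order as the signal we are trying to isolate. To close the argument one must replace the crude $O(\log b/b^2)$ in the expansions of $s_{b,3}$ (and of the tail) by a \emph{smooth} functional form whose first finite difference in $b$ is genuinely $o(1/b^2)$. A delicate Euler--Maclaurin analysis of the double sum defining $s_{b,3}$ should in principle produce such a smooth expansion by expressing the Riemann-sum error as a regular function of $b$, but carrying out the cancellations rigorously -- and in particular ruling out an accidental sign-flip in the sub-leading term -- is the technical heart of the problem and is where I expect the main effort to lie.
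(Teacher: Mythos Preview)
The statement you are attempting is labelled \emph{Conjecture} in the paper, and the authors say explicitly that they ``do not have a proof of this'' and ``leave this for a future research.'' There is therefore no proof in the paper to compare your proposal against.

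As for the proposal itself: it is a research plan, not a proof, and you are candid about this. The reduction from log-concavity to concavity via the identity $s_b^2 - s_{b-1}s_{b+1} = s_b(\Delta_b - \Delta_{b+1}) + \Delta_b\Delta_{b+1}$ is valid, and the expansion $s_b = \log b + \gamma + (2\log b + C)/b + O((\log b)/b^2)$ is plausible and your derivation of $s_{b,3} = (\log b)/b + O((\log b)/b^2)$ via the telescoping $\sum_{a=1}^{b-1}\log\frac{a+1}{a}=\log b$ is a nice observation. But, as you yourself note, second-differencing that expansion leaves $\Delta_b - \Delta_{b+1} = \frac{1}{b(b+1)} + O((\log b)/b^2)$, where the error dominates the main term. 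Until the Euler--Maclaurin refinement you allude to is actually carried out --- with the $O((\log b)/b^2)$ contributions from $s_{b,3}$, from $\sum_{k\ge 4}s_{b,k}$, and from the harmonic-number remainder each expressed as smooth functions of $b$ whose second differences are $o(1/b^2)$, and with the signs of the resulting sub-leading constants pinned down --- nothing has been proved for large $b$.

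For the small-$b$ regime, your plan to push $\ell$ and $m$ large in Theorem~\ref{rpthm1} is sound in principle (the bounds do converge to $s_b$), but note that the paper's own numerics with $\ell=m=5$ give $L(b)<0<U(b)$ for every $b$ tabulated, so the existing computation settles the sign of $s_b^2 - s_{b-1}s_{b+1}$ for \emph{no} value of $b$. You would need to carry out a substantially finer computation, evaluating many $s_{b,k}$ to high precision, and that work has not been done either.
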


\section{Acknowledgments}

Phakhinkon Phunphayap receives a scholarship from Science Achievement Scholarship of Thailand(SAST). Prapanpong Pongsriiam receives financial support jointly from The Thailand Research Fund and Faculty of Science Silpakorn University, grant number RSA5980040.

\end{document}